\theoremstyle{plain}
\newtheorem{thm}{Theorem} 
\newtheorem{lemma}[thm]{Lemma}
\newtheorem*{thm*}{Theorem}
\theoremstyle{definition}
\DeclareFontFamily{OT1}{pzc}{}                                
\DeclareFontShape{OT1}{pzc}{m}{it}{<-> s * [1.10] pzcmi7t}{}
\DeclareMathAlphabet{\mathpzc}{OT1}{pzc}{m}{it}
\DeclareSymbolFont{sfoperators}{OT1}{bch}{m}{n} \DeclareSymbolFontAlphabet{\mathsf}{sfoperators} \makeatletter\def\operator@font{\mathgroup\symsfoperators}\makeatother 
\DeclareSymbolFont{cmletters}{OML}{cmm}{m}{it}              
\DeclareSymbolFont{cmsymbols}{OMS}{cmsy}{m}{n}
\DeclareSymbolFont{cmlargesymbols}{OMX}{cmex}{m}{n}
\DeclareMathSymbol{\myjmath}{\mathord}{cmletters}{"7C}     \let\jmath\myjmath 
\DeclareMathSymbol{\myamalg}{\mathbin}{cmsymbols}{"71}     
\DeclareMathSymbol{\mycoprod}{\mathop}{cmlargesymbols}{"60}
\DeclareMathSymbol{\myalpha}{\mathord}{cmletters}{"0B}     \let\alpha\myalpha 
\DeclareMathSymbol{\mybeta}{\mathord}{cmletters}{"0C}      \let\beta\mybeta
\DeclareMathSymbol{\mygamma}{\mathord}{cmletters}{"0D}     \let\gamma\mygamma
\DeclareMathSymbol{\mydelta}{\mathord}{cmletters}{"0E}     \let\delta\mydelta
\DeclareMathSymbol{\myepsilon}{\mathord}{cmletters}{"0F}   \let\epsilon\myepsilon
\DeclareMathSymbol{\myzeta}{\mathord}{cmletters}{"10}      \let\zeta\myzeta
\DeclareMathSymbol{\myeta}{\mathord}{cmletters}{"11}       \let\eta\myeta
\DeclareMathSymbol{\mytheta}{\mathord}{cmletters}{"12}     \let\theta\mytheta
\DeclareMathSymbol{\myiota}{\mathord}{cmletters}{"13}      \let\iota\myiota
\DeclareMathSymbol{\mykappa}{\mathord}{cmletters}{"14}     \let\kappa\mykappa
\DeclareMathSymbol{\mylambda}{\mathord}{cmletters}{"15}    \let\lambda\mylambda
\DeclareMathSymbol{\mymu}{\mathord}{cmletters}{"16}        \let\mu\mymu
\DeclareMathSymbol{\mynu}{\mathord}{cmletters}{"17}        \let\nu\mynu
\DeclareMathSymbol{\myxi}{\mathord}{cmletters}{"18}        \let\xi\myxi
\DeclareMathSymbol{\mypi}{\mathord}{cmletters}{"19}        \let\pi\mypi
\DeclareMathSymbol{\myrho}{\mathord}{cmletters}{"1A}       \let\rho\myrho
\DeclareMathSymbol{\mysigma}{\mathord}{cmletters}{"1B}     \let\sigma\mysigma
\DeclareMathSymbol{\mytau}{\mathord}{cmletters}{"1C}       \let\tau\mytau
\DeclareMathSymbol{\myupsilon}{\mathord}{cmletters}{"1D}   \let\upsilon\myupsilon
\DeclareMathSymbol{\myphi}{\mathord}{cmletters}{"1E}       \let\phi\myphi
\DeclareMathSymbol{\mychi}{\mathord}{cmletters}{"1F}       \let\chi\mychi
\DeclareMathSymbol{\mypsi}{\mathord}{cmletters}{"20}       \let\psi\mypsi
\DeclareMathSymbol{\myomega}{\mathord}{cmletters}{"21}     \let\omega\myomega
\DeclareMathSymbol{\myvarepsilon}{\mathord}{cmletters}{"22}\let\varepsilon\myvarepsilon
\DeclareMathSymbol{\myvartheta}{\mathord}{cmletters}{"23}  \let\vartheta\myvartheta
\DeclareMathSymbol{\myvarpi}{\mathord}{cmletters}{"24}     \let\varpi\myvarpi
\DeclareMathSymbol{\myvarrho}{\mathord}{cmletters}{"25}    \let\varrho\myvarrho
\DeclareMathSymbol{\myvarsigma}{\mathord}{cmletters}{"26}  \let\varsigma\myvarsigma
\DeclareMathSymbol{\myvarphi}{\mathord}{cmletters}{"27}    \let\varphi\myvarphi
\newcommand\N{{\mathbb N}}
\newcommand\R{{\mathbb R}}
\newcommand\T{{\mathbb T}}
\newcommand\hyp{{\textup{hyp}}}
\title{Polynomials over strict semirings do not admit unique factorization}
\author{Alexander Agudelo}
\email{alagudel@impa.br}
\address{Instituto Nacional de Matem\'atica Pura e Aplicada, Rio de Janeiro, Brazil}
\begin{document}

\begin{abstract} 
 In this short note, we prove the claim of the title.
\end{abstract}

\maketitle

\ \vspace{-1cm}


\section*{Introduction}

It is well-known that the polynomial algebra over the tropical semifield does not satisfy unique factorization. For example,
\[
 (T+1)(T^2+1) \ = \ T^3+T^2+T+1 \ = \ (T+1)^3
\]
are two different factorizations of the same polynomial. Note that this failure of unique factorization should not be confused with the unique factorization property for \emph{tropical (polynomial) functions}, which holds since polynomials that define the same function are identified; also cf.\ the comments at the end of section 2 in \cite{Speyer-Sturmfels09}.

It is clear that the above example works for polynomial algebras over any idempotent semiring. What is not so clear, and to our knowledge not mentioned in the literature yet, is that unique factorization fails for polynomial algebras over any strict semiring.

Examples for which unique factorization fails are the polynomial algebra over the next semirings:
\begin{itemize}
	\item Any idempotent semirings.
	\item The natural numbers $\N$.
	\item The non-negative real numbers $\R_{\geq 0}$,
	\item The ambient semiring $(\T^\hyp)^+=\N[\R_{>0}]$ of the tropical hyperfield $\T^\hyp$; cf.\ \cite[Ex.\ 5.7.4]{Lorscheid18} for more details on $\T^\hyp$.
	\item $S_0=\N [x,y]/(xy\sim 1, x+y\sim 1)$. This semiring is interesting since there is a semiring morphism from $S_0$ to $R$ if and only if there is an element $a\in R^{\times}$ such that $a+a^{-1}=1$, and this last property leads to different cases in our proof.
\end{itemize}

\section*{The theorem}

A \emph{(commutative) semiring (with $0$ and $1$} is a set $R$ with two constants $0$ and $1$ and two binary operations $+$ and $\cdot$ such that both $(R,+,0)$ and $(R,\cdot,1)$ are commutative monoids and such that $0\cdot a=0$ and $a(b+c)=ab+ac$ for all $a,b,c\in R$. A semiring $R$ is \emph{strict} if $a+b=0$ implies $a=b=0$ for all $a,b\in R$. It is \emph{without zero divisors} if $ab=0$ implies $a=0$ or $b=0$ for all $a,b\in R$. An element in $R$ is a \emph{unit} if it has an inverse element in the multiplicative monoid of $R$. The set of all units of $R$ will be denoted by $R^\times$. A non-zero non-unit element of $R$ is \emph{irreducible} if it is not a product of two non-units. The semiring $R$ admits the \emph{unique factorization property} if it is without zero divisor and every non-zero non-unit element has a unique factorization into irreducible elements, up to permutations and multiplication by units. The \emph{polynomial algebra} over $R$ is the semiring $R[T]$ of formal sums $\sum_{i\in \N} a_i T^i$, where there is $n$ such that $a_n=0$ for all $i>n$, together with the usual addition and multiplication of polynomials.

Our main theorem is the following.

\begin{thm}\label{thm1}
Polynomial algebras over strict semirings without zero divisors do not satisfy the unique factorization property.
\end{thm}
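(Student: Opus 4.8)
The plan is to avoid any appeal to an ambient ring (which is unavailable for idempotent $R$) and instead to exhibit a single explicit polynomial, with coefficients in the image of $\N$, that already carries two essentially different factorizations in $R[T]$ for \emph{every} nontrivial strict $R$ without zero divisors. Concretely I would take
\[
 P(T)\ =\ (T^3+T+2)(T+2)\ =\ (T^3+T^2+4)(T+1)\ =\ T^4+2T^3+T^2+4T+4 .
\]
The middle equality is a polynomial identity over $\N$ (both products expand to $T^4+2T^3+T^2+4T+4$), so it descends to $R[T]$ for every semiring $R$ through the canonical morphism $\N[T]\to R[T]$. Thus all that remains is: (i) the four displayed factors are irreducible in $R[T]$, and (ii) the two factorizations are not matched by a permutation together with multiplication by units.

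For (i), since $R$ has no zero divisors the same holds for $R[T]$, degrees are additive, and the units of $R[T]$ are exactly those of $R$; hence $T+1$ and $T+2$ are irreducible, as any proper factorization would need a constant factor dividing the unit leading coefficient. The cubics are where strictness does the real work. Writing $T^3+T+2=(T+c)(T^2+dT+e)$ (both factors monic after absorbing a unit), the coefficient of $T^2$ gives $c+d=0$, so $c=d=0$ by strictness, and then the constant term is $ce=0\neq 2$, a contradiction once $1\neq 0$. Likewise any factorization of $T^3+T^2+4$ forces $e+cd=0$ from the linear coefficient, whence $e=0$ and $cd=0$ by strictness, and again $ce=0\neq 4$. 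The conceptual point is that each cubic is $\gamma\cdot(T+c)$ for $\gamma=T^2-T+2\notin\N[T]$, and the vanishing middle coefficient it leaves behind is precisely an equation $u+v=0$ that strictness collapses.

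For (ii), the two factorizations match degrees, so an equivalence would force $T+2\sim T+1$ and $T^3+T+2\sim T^3+T^2+4$. But $T+2=u(T+1)$ with $u\in R^\times$ forces $u=1$ and then $2=1$; hence if $1+1\neq 1$ the factorizations are already distinct. In the remaining case $1+1=1$ (equivalently, the unit $a=1$ realizes the relation $a+a^{-1}=1$ governing the paper's dichotomy) the linear parts coincide, but then $4=1$ too, and the cubics become $T^3+T+1$ and $T^3+T^2+1$, whose coefficient sequences differ as long as $0\neq 1$, so they are non-associate and the two decompositions remain distinct. I expect the main obstacle to be exactly this uniform irreducibility of the cubics: it cannot be imported from $\Z[T]$ or from real root location and must be squeezed out of strictness alone, as above; verifying that the two factorizations survive as genuinely different irreducible decompositions in the degenerate idempotent‑like case $1+1=1$ is the second delicate point.
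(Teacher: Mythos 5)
Your proposal is correct, but it takes a genuinely different route from the paper. The paper factors the degree-five polynomial $T^5+T^4+T^3+T^2+T+1$ as $(T+1)(T^4+T^2+1)=(T^3+1)(T^2+T+1)$ and gets distinctness for free because the degree patterns $(1,4)$ and $(3,2)$ differ; the price is that irreducibility of $T^2+T+1$ and $T^4+T^2+1$ is \emph{conditional}: it fails exactly when some $a\in R^\times$ satisfies $a+a^{-1}=1$, so the paper needs a dichotomy (your parenthetical guess about this is exactly right) and, in the exceptional case, switches to the refined identity $(T+1)(T^2+a)(T^2+a^{-1})=(T^3+1)(T+a)(T+a^{-1})$, whose factors are all of the form $T^n+\textup{unit}$ and hence irreducible by its Lemma 1. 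Your quartic $(T^3+T+2)(T+2)=(T^3+T^2+4)(T+1)$ trades these roles: irreducibility of all four factors is \emph{unconditional} (strictness kills the equations $c+d=0$, resp.\ $e+cd=0$, and then the constant term $ce=0$ cannot equal $2$ or $4$, which are nonzero by strictness plus $1\neq 0$), while distinctness, no longer automatic from degrees since both patterns are $(3,1)$, needs your small case split on whether $1+1=1$; in that degenerate case the factorizations become $(T^3+T+1)(T+1)=(T^3+T^2+1)(T+1)$ and the cubics are non-associate because a unit multiplier must be $1$ by leading coefficients, forcing $0=1$. Both arguments are complete; yours has the merits of lower degree, coefficients coming from $\N$, and no auxiliary element $a$ or case-dependent choice of factorization, while the paper's keeps all coefficients in $\{0,1\}$ and never has to compare same-degree factors for associateness. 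Two minor points you should make explicit: the theorem needs $1\neq 0$ (you do flag nontriviality, which the paper leaves implicit), and for the cubics you should record, as you did for the linear factors, that a splitting into constant times cubic is impossible because a constant factor of a monic polynomial is forced to be a unit, so the only shape to rule out is linear times quadratic.
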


\section*{The proof}

Let $R$ be a strict semiring without zero divisors, we know that $R[T]$ is without zero divisor and the units of $R[T]$ are the units of $R$ (cf.\ \cite[Ex.\ 4.23]{Golan99}). This means, as usual, two irreducible polynomials of different degree can not differ by a unit. Thus if we find one polynomial that admits two different factorization into irreducible polynomials of different degree we are done.

Note that the polynomial $T^5+T^4+T^3+T^2+T+1$ admits the following two factorization
	\[(T+1)(T^4+T^2+1) = (T^3+1)(T^2+T+1).
\]

The next lemmas tell us when the polynomial above are irreducible.

\begin{lemma}\label{lemma1}
For all $a\in R^\times$ and $n\geq 1$ the polynomial $T^n+a$ is irreducible. 
\end{lemma}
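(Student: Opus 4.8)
The plan is to prove that $T^n+a$ is irreducible by a degree argument on any hypothetical factorization, exploiting strictness to control the coefficients. Suppose $T^n+a = f\cdot g$ with $f,g$ non-units; since $R[T]$ has no zero divisors, $\deg f + \deg g = n$, and because $a\in R^\times$ both $f$ and $g$ must have nonzero constant terms (their product is $a\neq 0$, and $R$ has no zero divisors). Write $f=\sum_{i=0}^{d} b_i T^i$ and $g=\sum_{j=0}^{e} c_j T^j$ with $d+e=n$, $d,e\geq 1$, and $b_0,c_0,b_d,c_e$ all nonzero.

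The key step is to read off the intermediate coefficients of the product and use strictness. For $1\le k\le n-1$, the coefficient of $T^k$ in $fg$ is $\sum_{i+j=k} b_i c_j$, and this must equal $0$ in $R$. Since $R$ is strict, a sum of elements of $R$ vanishes only if every summand vanishes; hence $b_i c_j = 0$ whenever $i+j=k$ with $0<k<n$. Because $R$ has no zero divisors, each such product being zero forces $b_i=0$ or $c_j=0$. I would first apply this with the pair $(i,j)=(0,e)$: since $b_0\neq 0$ and $0<e<n$ (as $e\le n-1$ because $d\ge 1$), the coefficient of $T^e$ includes the term $b_0 c_e$, which must therefore be forced to $0$, contradicting $c_e\neq 0$ and the absence of zero divisors.

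To make this clean, the sharpest approach is to look at the lowest-degree term of $g$ above the constant, or symmetrically to pin down the shapes of $f$ and $g$ directly. Concretely, I expect to argue that the coefficient of $T^e$ in $fg$ contains the summand $b_0 c_e$ (the top term of $g$ times the constant term of $f$); since $b_0 c_e\neq 0$ by no zero divisors, and all other summands are also forced to vanish by strictness, the whole coefficient cannot be $0$ unless $e=0$ or $e=n$, i.e.\ unless $g$ is a constant or $f$ is a constant. Either way one of $f,g$ has degree $0$; combined with the nonzero leading and constant coefficients and the fact that units of $R[T]$ are exactly units of $R$, that factor must be a unit, contradicting our assumption.

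The main obstacle is ensuring the index $e$ (or $d$) lands strictly between $0$ and $n$ so that the relevant mixed coefficient is genuinely required to vanish; this is exactly where the assumption $d,e\geq 1$ from non-unitness enters, and where I must be careful that a factor of positive degree with a nonzero constant term cannot be a unit. Strictness is doing the real work: in a field or ring the coefficient $b_0c_e$ could be cancelled by other terms, but over a strict semiring there is no cancellation, so every summand of a vanishing sum must itself vanish, which is precisely what makes $T^n+a$ rigid enough to be irreducible.
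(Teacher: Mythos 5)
Your proof is correct and follows essentially the same route as the paper: both examine the coefficient of $T^k$ where $k$ is the degree of one of the hypothetical factors, observe that this coefficient contains the summand (constant term of one factor)$\times$(leading coefficient of the other), and then use strictness to force that summand to vanish, contradicting its nonzeroness. The only cosmetic difference is that the paper first normalizes both factors to be monic, so the offending summand is simply the unit $d_0$, whereas you keep general leading coefficients and instead contradict the absence of zero divisors via $b_0 c_e \neq 0$.
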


\begin{proof}
It is obvious that $T+a$ is irreducible. For $n\geq 2$ assume that $T^n+a$ is reducible, then there are $1\leq m < n$ and $c_0, \ldots, c_{m-1}, d_0, \ldots, d_{n-m-1}\in R$ such that
	\[T^n+a = (T^m + c_{m-1} T^{m-1} + \cdots + c_0)(T^{n-m} +d_{n-m-1} T^{n-m-1} + \cdots + d_0).
\]
Therefore $c_0 d_0=a$ and $\sum_{i=1}^k c_i d_{k-i}=0$ for all $1\leq k\leq n-1$. In particular, $d_0$ is a unit. For $k=m$, we get $d_0+\sum_{i=1}^{m-1} c_i d_{k-i}=0$. Since $R$ is strict, $d_0=0$, which is a contradiction.
\end{proof}

\begin{lemma}\label{lemma2}
Either $T^2+T+1$ is irreducible or there is $a\in R^\times$ such that $a+a^{-1}=1$. In the latter case, $T^2+T+1=(T+a)(T+a^{-1})$.
\end{lemma}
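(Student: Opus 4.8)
The plan is to analyze the factorization $T^2+T+1 = (T+a)(T+b)$ directly by expanding and comparing coefficients. First I would observe that any factorization of a monic quadratic into non-units must be a product of two monic linear factors (by the same degree-counting argument used in Lemma~\ref{lemma1}, since the units of $R[T]$ are exactly the units of $R$). So I would suppose $T^2+T+1=(T+a)(T+b)$ with $a,b\in R$ and expand to get the system $ab=1$ and $a+b=1$.

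Next I would extract the structural consequence of this system. From $ab=1$ we immediately learn that both $a$ and $b$ are units, with $b=a^{-1}$. Substituting into $a+b=1$ gives precisely the condition $a+a^{-1}=1$. Conversely, if such a unit $a$ exists, then setting $b=a^{-1}$ produces the factorization $(T+a)(T+a^{-1})$, which expands back to $T^2+(a+a^{-1})T+1 = T^2+T+1$. This establishes the ``if and only if'' dichotomy cleanly: $T^2+T+1$ is reducible exactly when such an element $a$ exists, and in that case the displayed factorization holds.

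The main subtlety — and the one genuine step beyond routine coefficient-matching — is justifying that a factorization of $T^2+T+1$ into non-units must have both factors monic of degree one. In a ring this is automatic because the leading coefficients multiply to $1$ and hence are units; over a semiring one must confirm the same reasoning survives, using that $R$ has no zero divisors (so a degree-$2$ polynomial cannot split off a degree-$0$ non-unit factor) and that the product of the two leading coefficients equals $1$, forcing each to be a unit. After absorbing these unit leading coefficients, one reduces to the monic case and the coefficient comparison goes through as above. I expect this normalization argument to be the only place requiring care; the algebraic manipulation with $a+a^{-1}=1$ is then immediate.
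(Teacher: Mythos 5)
Your proof is correct and takes essentially the same approach as the paper: reduce to a factorization $T^2+T+1=(T+c)(T+d)$ into monic linear factors, compare coefficients to get $cd=1$ and $c+d=1$, and conclude that $d=c^{-1}$ with $c+c^{-1}=1$. The only difference is that you make explicit the normalization step (degrees add since $R$ has no zero divisors, the leading coefficients multiply to $1$ and hence are units, and can be absorbed), which the paper's proof leaves implicit by writing the factorization in monic form from the start.
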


\begin{proof}
Suppose that $T^2+T+1$ is reducible. Then there are $c,d\in R$ such that
	\[T^2+T+1 = (T+c)(T+d).
\]
This implies $cd=1$ and $c+d=1$.
\end{proof}

\begin{lemma} \label{lemma3}
Either $T^4+T^2+1$ is irreducible or there is $a\in R^\times$ such that $a+a^{-1}=1$. In the latter case, $T^4+T^2+1=(T^2+a)(T^2+a^{-1})$.
\end{lemma}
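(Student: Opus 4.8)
The plan is to mimic the arguments of Lemmas~\ref{lemma1} and~\ref{lemma2}: assume that $T^4+T^2+1$ is reducible and extract the relation $a+a^{-1}=1$ by comparing coefficients. First I would reduce to a factorization into \emph{monic} factors of positive degree. Since the units of $R[T]$ are the units of $R$ and $R[T]$ has no zero divisors, any nontrivial factorization $T^4+T^2+1=gh$ has $\deg g\geq 1$ and $\deg h\geq 1$; moreover the leading coefficients of $g$ and $h$ are units whose product is the leading coefficient $1$, so after multiplying $g$ and $h$ by suitable reciprocal units we may assume both are monic. As the degrees add up to $4$, only the splittings $1+3$ and $2+2$ can occur, up to swapping the two factors.

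Next I would rule out the $1+3$ case. Writing $T^4+T^2+1=(T+c)(T^3+b_2T^2+b_1T+b_0)$ and comparing the coefficient of $T^3$ gives $c+b_2=0$, whence $c=b_2=0$ by strictness. But the constant coefficient forces $cb_0=1$, so $c$ would have to be a unit, contradicting $c=0$. Hence no factorization of this shape exists.

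It then remains to treat the $2+2$ case. Writing $T^4+T^2+1=(T^2+c_1T+c_0)(T^2+d_1T+d_0)$ and comparing the coefficient of $T^3$ gives $c_1+d_1=0$, so strictness yields $c_1=d_1=0$. The constant coefficient then reads $c_0d_0=1$, so $d_0=c_0^{-1}$ is a unit, and the coefficient of $T^2$ reads $c_0+d_0=1$. Setting $a=c_0$ produces an element $a\in R^\times$ with $a+a^{-1}=1$, together with the claimed factorization $T^4+T^2+1=(T^2+a)(T^2+a^{-1})$.

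The mechanism making everything work is strictness: because the target polynomial has no $T^3$-term, the sum of the two ``subleading'' coefficients vanishes, and strictness forces each of them to vanish individually, which is exactly what kills the odd split and pins the even split down to the required form. I expect the only points needing care to be the initial normalization to monic factors, so that the coefficient comparisons are clean, and the bookkeeping that discards the $1+3$ split; the $2+2$ computation itself is routine and reproduces precisely the relation appearing in Lemma~\ref{lemma2}.
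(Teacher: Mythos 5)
Your proposal is correct and follows essentially the same route as the paper: split into the $1+3$ and $2+2$ cases, kill the odd split via strictness applied to the $T^3$-coefficient, and extract $c_0d_0=1$, $c_0+d_0=1$ from the even split. The only difference is that you explicitly justify the reduction to monic factors (via the unit leading coefficients), a normalization the paper uses tacitly.
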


\begin{proof}
Suppose that $T^4+T^2+1$ is reducible. Now we have two possibilities, $T^4+T^2+1$ factors as the product of one linear and one cubic polynomial or as the product of two quadratics polynomials.

In the first case, there are $c_0,d_0,d_1,d_2\in R$ such that
	\[T^4+T^2+1 = (T+c_0)(T^3+d_2T^2+d_1T+d_0).
\]
Therefore $c_0d_0=1$ and $c_0+d_2=0$, which is impossible since $R$ is strict.

In the second case, there are $c_0,c_1,d_0,d_1\in R$ such that 
	\[T^4+T^2+1 = (T^2+c_1T+c_0)(T^2+d_1T+d_0).
\]
Therefore $c_0d_0=1$, $c_0d_1+c_1d_0=0$, $c_1d_1+c_0+d_0=1$ and $c_1+d_1=0$. Since $R$ is strict, we get $c_1=d_1=0$, $c_0d_0=1$ and $c_0+d_0=1$.
\end{proof}

We are ready to finish the proof of Theorem \ref{thm1}.

If there is no $a\in R^\times$ such that $a+a^{-1}=1$, then Lemmas \ref{lemma1}, \ref{lemma2} and \ref{lemma3} show that all factors in
	\[(T+1)(T^4+T^2+1) = (T^3+1)(T^2+T+1)
\]
are irreducible. Otherwise, we have
	\[(T+1)(T^2+a)(T^2+a^{-1}) = (T^3+1)(T+a)(T+a^{-1}).
\]
Lemma \ref{lemma1} implies that all factors in the above expression are irreducible. In both cases we have found two different factorization of $T^5+T^4+T^3+T^2+T+1$ into irreducible polynomials of different degree. \hfill\qed

\begin{small}
 \bibliographystyle{plain}
 \bibliography{biblio}
\end{small}

\end{document}